\theoremstyle{plain}
\newtheorem{theorem}{Theorem}[section]
\newtheorem{lemma}[theorem]{Lemma}
\theoremstyle{definition}
\newtheorem{definition}{Definition}[section]
\theoremstyle{example}
\theoremstyle{remark}
\newlength{\defbaselineskip}
\def\ps@pprintTitle{
  \let\@oddhead\@empty
  \let\@evenhead\@empty
  \let\@oddfoot\@empty
  \let\@evenfoot\@oddfoot
}
\journal{Computer aided geometric design}
\begin{document}
\begin{frontmatter}
\title{Hemi-slant submanifolds of nearly Kaehler manifolds}
\author[n1]{Mehraj Ahmad Lone\corref{cor1}}
\ead{mehraj.jmi@gmail.com}
%\author[n2]{Mohammed Jamali}
%\ead{jamali\_dbd@yahoo.co.in}
\author[n3]{Mohammad Hasan Shahid}
%\ead{hasan\_jmi@yahoo.com}
\address[n1]{Department of Mathematics, Central University of Jammu, Jammu, 180011, India.}
%\address[n2]{Department of Mathematics, Al-Falah University, Haryana, 121004, India}
\address[n3]{Department of Mathematics, Jamia Millia Islamia, New Delhi-110 025, India}
\cortext[cor1]{Corresponding author}
\begin{abstract}
In the present paper, we study the hemi-slant submanifolds of nearly Kaehler manifolds. We study the integrability of distributions involved in the definition of hemi-slant submanifolds. some results are worked out on totally umbilical hemi-slant submanifolds. we study the cohomology class for hemi-slant submanifolds of nearly Kaehler manifolds.

\end{abstract}
\begin{keyword}
\texttt Cohomology class,  Hemi-slant submanifolds, Nearly Kaehler manifold, Umbilical submanifold.

2010 Mathematics Subject Classification:  53B25, 53C40, 53C55, 57R19.
\end{keyword}
\end{frontmatter}
\section{Introduction} The study of slant submanifolds was started in 1990 by Chen \cite{biha8} which is the generalization of CR-submanifolds introduced by Bejancu \cite{biha1}. The theory of slant submanifolds become very rich area of research for geometers. Slant submanifolds were studied by diffrent geometers in different ambient spaces. In 1994 Papaghiuc   generalizes the slant submanifolds as semi-slant submanifolds \cite{biha11}. Carriazo introduced the idea of bi-slant submanifolds as generalization of semi-slant submanifolds \cite{biha3}. The hemi-slant submanifolds are one of the classes of bi-slant submanifolds. The hemi-slant submanifolds were first named as anti-slant submanifolds. In \cite{biha5} B. Sahin named these submanifolds as hemi-slant submanifolds, as anti-slant seems to refer no slant factor. since then these submanifolds were studied in different kinds of structures.

In this paper, we first collect some results and definitions for ready references in the preliminary section. In third section we study the hemi-slant submanifolds of nearly Kaehler manifolds. In fourth section we obtain some results on totally umbilical hemi-slant submanifolds and in the last section we study  cohomology class for hemi-slant submanifolds  of nearly Kaehler manifolds.

\section{Preliminaries}

Let $f:\mathcal{N}\rightarrow \overline{\mathcal{N}}$ be an isometrical  immersion of a Riemannian manifolds $\mathcal{N}$ into $\overline{\mathcal{N}}$. The Riemannian metric  for $\mathcal{N}$ and  $\overline{\mathcal{N}}$ is denoted by the same symbol $g$. Let $T\mathcal{N}$ and $T^{\perp} \mathcal{N}$ denote the Lie algebra of vector field and set of all normal vector fields on $\mathcal{N}$ respectively. The Levi-Civita connection on $T\mathcal{N}$ and  $T^{\perp} \mathcal{N}$ is denoted by $\nabla$ and $\overline{\nabla}$. the Gauss and Weingarten equation are give as

\begin{eqnarray}\label{a1}
\overline{\nabla}_{X}Y = \nabla_{X}Y + \Omega(X,Y),
\end{eqnarray}
\begin{eqnarray}\label{a2}
\overline{\nabla}_{X}V = - S_{V}X + \nabla_{X}^{\perp}Y,
\end{eqnarray}

for any $X, Y$ in $T\mathcal{N}$ and $V$ in $T^{\perp} \mathcal{N}$. Where $\Omega$ is the second fundamental form and $S$ is the shape operator. The second fundamental form and the shape operator are related by the following equation
\begin{eqnarray}\label{a3}
g(\Omega(X,Y), V) = g(S_{V}X, Y).
\end{eqnarray}
The mean curvature vector $\mathcal{H}$ on a submanifold $\mathcal{N}$ is given by the following relation
\begin{eqnarray}\label{a4}
\mathcal{H} = \frac{1}{l}\sum_{i=1}^{l}\Omega(e_{i},e_{i}),
\end{eqnarray}
where $l$ denotes the dimension of submanifold $\mathcal{N}$ and $\{e_{1},  ..., e_{l}\}$ is the local orthonormal frame.
 The submanifold $\mathcal{N}$ is said to be a totally umbilical submanifold of a Riemannian manifold $\mathcal{\overline{N}}$, if for the mean curvature vector $\mathcal{H}$ The following relation holds
\begin{eqnarray}\label{a5}
\Omega(X,Y) = g(X,Y)\mathcal{H}.
\end{eqnarray}
Using (\ref{a5}), the gauss and Weingarten equation takes the form
\begin{eqnarray}\label{a5n1}
\overline{\nabla}_{X}Y = \nabla_{X}Y + g(X,Y)\mathcal{H},
\end{eqnarray}
 and
\begin{eqnarray}\label{a5n2}
\overline{\nabla}_{X}V = -Xg(\mathcal{H}, V) + \nabla_{X}^{\perp}V.
\end{eqnarray}
for $X, Y \in T\mathcal{N}$ and $V \in T^{\perp}\mathcal{N}$.

let $\mathcal{P}X$ and $\mathcal{N}X$ denotes the tangential and normal components of $(\overline{\nabla_{X}}J)\mathcal{H}$ then
\begin{eqnarray}\label{a5n3}
\mathcal{P}X = -Xg(\mathcal{H}, \mathcal{JH}) - TX g(\mathcal{H},\mathcal{H}) -t\nabla_{X}^{\perp}\mathcal{H}
\end{eqnarray}
and
\begin{eqnarray}\label{a5n4}
\mathcal{N}X =  \nabla_{X}^{\perp}J\mathcal{H} - NX g(\mathcal{H},\mathcal{H}) - n\nabla_{X}^{\perp}\mathcal{H}
\end{eqnarray}
 for $X \in T\mathcal{N}$.

A submanifold $\mathcal{N}$ is said to be totally geodesic if $\Omega(X,Y) = 0$, for any $X, Y \in T\mathcal{N}$ and if $\mathcal{H} = 0$, the submanifold $\mathcal{N}$ is   minimal.

 Now for a submanifold $\mathcal{N}$ of a Riemannian manifold $\mathcal{\overline{N}}$, we can write $X \in T\mathcal{N}$ as
 \begin{eqnarray}\label{a6}
 JX = TX + NX,
 \end{eqnarray}
where $PX$ and $NX$ are the tangential and normal parts of $JX$ respectively.
In the same way, for any $V \in T^{\perp}\mathcal{N}$, we have
\begin{eqnarray}\label{a7}
JV = tV + nV,
\end{eqnarray}
where $tV$ and $nV$ are tangential and normal components of $JV$.
 The covariant derivative of tangential and normal parts of (\ref{a6}) and (\ref{a7}) is given as

\begin{eqnarray}\label{a8}
(\overline{\nabla}_{X}T)Y = \nabla_{X}TY - T\nabla_{X}Y,
\end{eqnarray}
\begin{eqnarray}\label{a9}
(\overline{\nabla}_{X}N)Y = \nabla_{X}^{\perp}NY - N\nabla_{X}Y,
\end{eqnarray}
\begin{eqnarray}\label{10}
(\overline{\nabla}_{X}t)V = \nabla_{X}tV - t\nabla_{X}^{\perp}V,
\end{eqnarray}
and
\begin{eqnarray}\label{a11}
(\overline{\nabla}_{X}f)V = \nabla_{X}^{\perp}fV - f\nabla_{X}^{\perp}V.
\end{eqnarray}

The manifold $\mathcal{\overline{N}}$  with almost complex structure $J$ and Riemannian metric $g$ is said to be a nearly Kaehler manifold if
\begin{equation}\label{a12}
(\overline{\nabla}_{X}J)Y + (\overline{\nabla}_{Y}J)X = 0,
\end{equation}
for any vector field $X, Y$ in $T\mathcal{N}$. The above condition is equivalent to the following equation
\begin{equation}\label{a13}
(\overline{\nabla}_{X}J)X  = 0,
\end{equation}
 for $X$ in $T\mathcal{N}$. For a nearly Kaehler manifold, we also have the following facts:
 \begin{eqnarray}\label{a14}
 J^{2} = -I \hspace{.5cm}and \hspace{.5cm} g(JX, JY)= g(X,Y),
 \end{eqnarray}
for all vector field $X , Y \in \mathcal{N}.$

We know that the Kaehler manifold is always a nearly Kaehler manifolds but converse is not always true.

 The covariant derivative of the complex structure $J$ is defined as
 \begin{eqnarray}\label{a15}
(\overline{\nabla}_{X}J)Y = \overline{\nabla}_{X}JY - J\overline{\nabla}_{X}Y,
\end{eqnarray}
or
\begin{eqnarray}\label{a16}
 \overline{\nabla}_{X}JX = J\overline{\nabla}_{X}X.
\end{eqnarray}
For a hemi-slant submanifold  $\mathcal{N}$ of a nearly Kaehler manifold $\mathcal{\overline{N}}$. The following facts can easily seen from \cite{biha5}
\begin{eqnarray}\label{a17}
T^{2}X = - cos^{2}\theta X,
\end{eqnarray}
\begin{eqnarray}\label{a18}
g(TX, TY) = cos^{2}\theta g(X,Y),
\end{eqnarray}
and
\begin{eqnarray}\label{a19}
g(NX, NY) = sin^{2}\theta g(X,Y),
\end{eqnarray}

for $X,Y \in D^{\theta}$.

Let $\mathcal{\overline{N}}$ be nearly Kaehler manifold and  $\mathcal{N}$ be a submanifolds of $\mathcal{\overline{N}}$. there are certain important classes of submanifolds viz totally real submanifolds, holomorphic submanifolds and CR-submanifolds as a generalization of totally real and holomorphic submanifolds. The CR-submanifolds were generalized by Chen as slant submanifolds. The hemi-slant submanifolds is a generalization  of slant submanifolds.

\begin{definition}
Let $\mathcal{N}$ be submanifold of nearly Kaehler  manifold $\mathcal{\overline{N}}$, then $\mathcal{N}$ is said to be a hemi-slant submanifold if there exist two orthogonal distributions $D^{\theta}$ and $D^{\perp}$ on $\mathcal{N}$ such that

\noindent(i) $T\mathcal{N}$ = $D^{\theta}\oplus D^{\perp}$

\noindent(ii) $D^{\theta}$ is a slant distribution with slant angle $\theta \ne \frac{\pi}{2}$,

\noindent(iii) $D^{\perp}$ is a totally real, that is $JD^{\perp}\subseteq T^{\perp}\mathcal{N}$,

it is clear from above that CR-submanifolds and slant submanifolds are hemi-slant submanifolds with slant angle $\theta = \frac{\pi}{2}$ and $D^{\theta}$ = {0}, respectively.
\end{definition}

\section{Hemi-slant submanifolds of nearly Kaehler manifolds}
In  this section, we study the integrability  and the minimality of the distributions involved in the definition. We try to prove some lemmas which are very important to prove for next two sections.

\begin{lemma}
Let $\mathcal{N}$ be proper hemi-slant submanifold of nearly Kaehler manifold $\mathcal{\overline{N}}$. then we have
\begin{equation*}\label{u1}
J(D^{\perp}) \perp N(D^{\theta}).
\end{equation*}
\end{lemma}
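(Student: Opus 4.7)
The statement to prove is that for any $Z \in D^{\perp}$ and any $X \in D^{\theta}$, the normal vectors $JZ$ and $NX$ are orthogonal, i.e.\ $g(JZ, NX) = 0$.

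My plan is to compute $g(JZ, NX)$ directly by replacing $NX$ with $JX - TX$ (which is legitimate by the tangential/normal decomposition in equation~(\ref{a6})), and then splitting the inner product into two pieces:
\begin{equation*}
g(JZ, NX) \;=\; g(JZ,\, JX - TX) \;=\; g(JZ, JX) - g(JZ, TX).
\end{equation*}

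For the first piece, I would use the fact from~(\ref{a14}) that $J$ is an isometry, so $g(JZ, JX) = g(Z,X)$. Since $D^{\theta}$ and $D^{\perp}$ are, by the very definition of a hemi-slant submanifold, mutually orthogonal distributions on $\mathcal{N}$, this gives $g(Z,X) = 0$. For the second piece, I would observe that $JZ$ is normal (this is exactly the totally real condition $J D^{\perp} \subseteq T^{\perp}\mathcal{N}$ in (iii) of the definition), whereas $TX$ is tangential by construction; hence $g(JZ, TX) = 0$.

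Combining the two vanishing terms yields $g(JZ, NX) = 0$, which is the claim. The proof is essentially a one-line manipulation; the only thing to be careful about is recognizing that the nearly Kaehler hypothesis is not actually needed here, only the almost-Hermitian compatibility of $g$ with $J$ in~(\ref{a14}), the orthogonality of the two distributions, and the total-reality of $D^{\perp}$. There is no real obstacle; the statement is a structural consequence of the definition rather than a computation involving the connection.
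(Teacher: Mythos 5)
Your proof is correct and is essentially the same argument as the paper's: both reduce $g(JZ,NX)$ to $g(JZ,JX)=g(Z,X)=0$ using the decomposition $JX=TX+NX$, the tangency of $TX$ against the normality of $JZ$, and the compatibility $g(JX,JY)=g(X,Y)$. Your added observation that only the almost-Hermitian structure (not the nearly Kaehler condition) is used is accurate but does not change the substance of the argument.
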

\begin{proof}
For any $X \in D^{\theta}$ and $Z \in D^{\perp}$, using (\ref{a6})
\begin{equation*}\label{u2}
g(JX, JZ) = (TX + NX, JZ) = g(NX,JZ).
\end{equation*}
We know from (\ref{a14})
\begin{equation*}\label{u3}
g(JX, JZ) = g(X, Z) = 0,
\end{equation*}
which implies that
\begin{equation*}\label{u4}
g(NX,JZ) = 0,
\end{equation*}
which shows that  $N(D^{\theta}) \perp J(D^{\perp})$
\end{proof}
In the light of above lemma the normal bundle $T^{\perp}\mathcal{N}$ can be decomposed as
\begin{equation}\label{u5}
T^{\perp}\mathcal{N} = J(D^{\perp}) \oplus N(D^{\theta})\oplus \mu,
\end{equation}
where $\mu$ is the invariant distribution of $T^{\perp}\mathcal{N}$ with respect to $J$.
\begin{lemma}
Let $\mathcal{N}$ be proper hemi-slant submanifold of nearly Kaehler manifold $\mathcal{\overline{N}}$. then we have
\begin{equation}\label{u6}
TD^{\perp} = \{0\} \hspace{.5cm} and \hspace{.5cm} TD^{\theta} = D^{\theta}.
\end{equation}
\end{lemma}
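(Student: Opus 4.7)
The plan splits naturally into the two assertions. For the first, $TD^{\perp}=\{0\}$, I would take an arbitrary $Z \in D^{\perp}$ and invoke condition (iii) of the definition, which says $JZ \in T^{\perp}\mathcal{N}$. Decomposing via (\ref{a6}) gives $JZ = TZ + NZ$, but since the left-hand side is purely normal, the tangential component $TZ$ must vanish. This is essentially immediate from the definitions.

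For the second assertion, I would first establish the inclusion $TD^{\theta} \subseteq D^{\theta}$ and then upgrade it to equality. To show $TX \in D^{\theta}$ for $X \in D^{\theta}$, since $T\mathcal{N} = D^{\theta}\oplus D^{\perp}$ it suffices to check that $TX \perp D^{\perp}$. For any $Z \in D^{\perp}$, I would write
\begin{equation*}
g(TX,Z) = g(JX - NX, Z) = g(JX,Z) - g(NX,Z).
\end{equation*}
The second term vanishes because $NX$ is normal and $Z$ is tangential. For the first term, using the compatibility $g(JX,Z) = -g(X,JZ)$ (which follows from $J^2=-I$ and $g(JX,JY)=g(X,Y)$ in (\ref{a14})), together with $JZ \in T^{\perp}\mathcal{N}$, gives $g(JX,Z) = 0$. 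Hence $TX$ is orthogonal to $D^{\perp}$ and therefore lies in $D^{\theta}$.

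The key step — and the only one that genuinely uses the hypothesis that the submanifold is \emph{proper} — is the reverse inclusion $D^{\theta} \subseteq TD^{\theta}$. Here I would apply (\ref{a17}): for any $X \in D^{\theta}$, $T^{2}X = -\cos^{2}\theta\, X$. Since the slant angle satisfies $\theta \neq \pi/2$, the scalar $\cos^{2}\theta$ is nonzero, so $T$ restricted to $D^{\theta}$ has inverse $-\sec^{2}\theta\, T$. In particular every $Y \in D^{\theta}$ can be written as $Y = T(-\sec^{2}\theta\, TY)$ with $-\sec^{2}\theta\, TY \in D^{\theta}$ by the previous step, giving $D^{\theta} \subseteq TD^{\theta}$. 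Combining both inclusions yields $TD^{\theta} = D^{\theta}$.

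The main (mild) obstacle is realizing that the set-theoretic equality $TD^{\theta}=D^{\theta}$ is not purely algebraic from the definition but requires the slant identity (\ref{a17}) together with the properness hypothesis $\theta \neq \pi/2$; without that, $T$ could fail to be injective on $D^{\theta}$ and only the inclusion would hold.
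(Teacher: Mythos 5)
Your proof is correct and follows exactly the route the paper indicates: the paper omits the argument, saying only that it "follows from (\ref{a6}), (\ref{a14}) and (\ref{a17}) trivially," and your write-up uses precisely those three facts — the decomposition $JZ=TZ+NZ$ for the first claim, the metric compatibility of $J$ for the inclusion $TD^{\theta}\subseteq D^{\theta}$, and the slant identity $T^{2}X=-\cos^{2}\theta\,X$ with $\theta\neq\pi/2$ for surjectivity. You have simply supplied the details the authors chose to omit, including the correct observation that properness is what makes $T$ invertible on $D^{\theta}$.
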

\begin{proof}
The proof follow from  (\ref{a6}), (\ref{a14}) and (\ref{a17})trivially, so we omit the proof.
\end{proof}

\begin{theorem}
Let $\mathcal{N}$ be a submanifold of nearly Kaehler manifold $\mathcal{\overline{N}}$, then for any $X, Y \in T\mathcal{N}$, we have the following
\begin{eqnarray}\label{u7}
\nabla_{X}TY  - S_{NY}X - T\nabla_{X}Y  - 2t\Omega(X,Y) + \nabla_{Y}TX - S_{NX}Y  - T\nabla_{Y}X = 0,
\end{eqnarray}
and
\begin{eqnarray*}\label{u8}
 \Omega(X,TY) +\nabla_{X}^{\perp}NY - N\nabla_{X}Y - 2n\Omega(X,Y) + \Omega(Y,TX) + \nabla_{Y}^{\perp}NX  - N\nabla_{Y}X = 0.
\end{eqnarray*}
\end{theorem}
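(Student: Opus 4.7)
The plan is to expand $(\overline{\nabla}_X J)Y + (\overline{\nabla}_Y J)X$ using the Gauss and Weingarten formulas together with the tangent/normal splittings, and then apply the nearly Kaehler identity (\ref{a12}) and separate tangential from normal parts.

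First, I would start from (\ref{a15}) and write $(\overline{\nabla}_X J)Y = \overline{\nabla}_X JY - J\overline{\nabla}_X Y$. Using the decomposition (\ref{a6}) to rewrite $JY = TY + NY$, the term $\overline{\nabla}_X TY$ is handled by the Gauss formula (\ref{a1}), since $TY$ is tangent to $\mathcal{N}$, while $\overline{\nabla}_X NY$ is handled by the Weingarten formula (\ref{a2}), since $NY$ is normal. This yields
\begin{equation*}
\overline{\nabla}_X JY \;=\; \nabla_X TY + \Omega(X,TY) - S_{NY}X + \nabla_X^{\perp} NY.
\end{equation*}
For the other piece, (\ref{a1}) gives $\overline{\nabla}_X Y = \nabla_X Y + \Omega(X,Y)$, and then applying $J$ together with (\ref{a6}) and (\ref{a7}) produces
\begin{equation*}
J\overline{\nabla}_X Y \;=\; T\nabla_X Y + N\nabla_X Y + t\Omega(X,Y) + n\Omega(X,Y).
\end{equation*}

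Subtracting these two expressions gives $(\overline{\nabla}_X J)Y$ with its tangential and normal parts clearly separated. Next, I would write down the analogous expansion for $(\overline{\nabla}_Y J)X$, obtained formally by swapping $X$ and $Y$. Adding the two expressions and invoking the nearly Kaehler condition (\ref{a12}) forces the sum to vanish. Noting the symmetry $\Omega(X,Y)=\Omega(Y,X)$ of the second fundamental form, the contribution of $t\Omega$ and $n\Omega$ doubles, producing the coefficients $-2t\Omega(X,Y)$ and $-2n\Omega(X,Y)$ that appear in the statement.

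Finally, I would collect the tangential terms to obtain the first displayed identity and the normal terms to obtain the second. The calculation is pure bookkeeping; the only points requiring attention are choosing the right Gauss/Weingarten formula according to whether the argument is tangent or normal, and correctly symmetrizing $\Omega$ when the two expansions are summed. I do not anticipate any genuine obstacle beyond this careful accounting.
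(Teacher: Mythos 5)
Your proposal is correct and follows essentially the same route as the paper: expand $(\overline{\nabla}_{X}J)Y+(\overline{\nabla}_{Y}J)X=0$ via (\ref{a15}), apply the Gauss and Weingarten formulas together with the decompositions (\ref{a6}) and (\ref{a7}), use the symmetry of $\Omega$ to double the $t\Omega$ and $n\Omega$ terms, and compare tangential and normal components. No substantive difference from the paper's argument.
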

\begin{proof}
For a nearly Kaehler manifold, we know that
\begin{eqnarray*}\label{u9}
(\overline{\nabla}_{X}J)Y + (\overline{\nabla}_{Y}J)X = 0,
\end{eqnarray*}
which implies that
\begin{eqnarray*}\label{u10}
\overline{\nabla}_{X}JY - J\overline{\nabla}_{X}Y + \overline{\nabla}_{Y}JX - J\overline{\nabla}_{Y}X = 0.
\end{eqnarray*}
Using (\ref{a1}) and  (\ref{a6}), we have
\begin{eqnarray*}\label{u11}
 \nonumber\overline{\nabla}_{X}TY + \overline{\nabla}_{X}NY - J\nabla_{X}Y - J\Omega(X,Y)
+ \overline{\nabla}_{Y}TX + \overline{\nabla}_{Y}NX - J\nabla_{Y}X - J\Omega(Y,X) = 0.
\end{eqnarray*}
Using (\ref{a1}), (\ref{a2}), (\ref{a6}) and (\ref{a7}), we get
\begin{eqnarray*}\label{u12}
 \nonumber\nabla_{X}TY + \Omega(X,TY) - S_{NY}X +\nabla_{X}^{\perp}NY -T\nabla_{X}Y - N\nabla_{X}Y - t\Omega(X,Y) -n\Omega(X,Y)\\
+\nabla_{Y}TX + \Omega(Y,TX) - S_{NX}Y +\nabla_{Y}^{\perp}NX -T\nabla_{Y}X - N\nabla_{Y}X - t\Omega(Y,X) -n\Omega(Y,X) = 0.
\end{eqnarray*}
Comparing the tangential and normal components, we  have the required results.
\end{proof}
In the following theorem, we prove the integrability of of the slant distribution.
\begin{theorem}
Let $\mathcal{N}$ be hemi-slant submanifold of a nearly Kaehler manifold, then the slant distribution $D^\theta$ is integrable if and only if
\begin{eqnarray}\label{u13}
 \nabla_{X}TY + \nabla_{Y}TX - S_{NX}Y - S_{NY}X - 2T\nabla_{Y}X -2t\Omega(X,Y) \in D^{\theta},
\end{eqnarray}
for $X,Y \in D^{\theta}.$
\end{theorem}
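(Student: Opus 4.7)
The plan is to re-express the tangential identity (\ref{u7}) of the preceding theorem in terms of the Lie bracket $[X,Y]$ and then extract the integrability criterion. Since the Levi-Civita connection is torsion-free, for $X,Y \in D^\theta$ we have $[X,Y] = \nabla_X Y - \nabla_Y X$, hence $\nabla_X Y + \nabla_Y X = [X,Y] + 2\nabla_Y X$. Applying $T$ and substituting into the rearranged form of (\ref{u7}), I would obtain
\[
T[X,Y] = \nabla_X TY + \nabla_Y TX - S_{NX}Y - S_{NY}X - 2T\nabla_Y X - 2t\Omega(X,Y),
\]
so the expression appearing in the theorem is precisely $T[X,Y]$.

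Next I would appeal to the preceding lemma: $TD^\perp = \{0\}$, $T(D^\theta) = D^\theta$, and $T|_{D^\theta}$ is invertible via $T^2 = -\cos^2\theta\, I$ with $\theta \neq \pi/2$. Decomposing $[X,Y] = A + B$ with $A \in D^\theta$ and $B \in D^\perp$ gives $T[X,Y] = TA$, so the forward direction is immediate: integrability of $D^\theta$ forces $B = 0$ and hence $T[X,Y] = TA \in D^\theta$.

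The main obstacle I foresee is the converse, since $T[X,Y] = TA$ lies in $D^\theta$ tautologically, which makes the literal reading of the displayed condition vacuous. To secure a genuine equivalence I would pair the tangential formula with the companion normal identity (the second display of the preceding theorem), which analogously produces $N[X,Y]$ as an explicit combination of second-fundamental-form and normal-connection terms. Using the normal-bundle decomposition (\ref{u5}), the $J(D^\perp)$-component of $N[X,Y]$ equals $JB$; hence the vanishing of $B$ -- equivalent to integrability of $D^\theta$ -- can then be certified by the two identities working in concert, with the tangential identity (\ref{u7}) contributing the $D^\theta$-level information and its normal counterpart supplying the $J(D^\perp)$-level obstruction.
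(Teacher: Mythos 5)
Your computation of $T[X,Y]$ from (\ref{u7}) is exactly what the paper does, and the reservation you raise about the converse is well founded: it is precisely the gap in the paper's own proof. The paper argues that, since $TD^{\theta}=D^{\theta}$, one has $[X,Y]\in D^{\theta}$ if and only if $T[X,Y]\in D^{\theta}$, and then stops. But, as you observe, Lemma 3.2 also gives $TD^{\perp}=\{0\}$, so writing $[X,Y]=A+B$ with $A\in D^{\theta}$ and $B\in D^{\perp}$ yields $T[X,Y]=TA\in D^{\theta}$ unconditionally; condition (\ref{u13}) is therefore always satisfied and cannot detect the $D^{\perp}$-component $B$ whose vanishing is what integrability actually requires. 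The paper's ``if and only if'' is really only an ``only if,'' and the theorem as literally stated is vacuous. Your proposed repair --- pairing (\ref{u7}) with the normal identity of Theorem 3.3 and isolating the $J(D^{\perp})$-component of $N[X,Y]$, which equals $JB$ by Lemma 3.1 and the decomposition (\ref{u5}), with $J$ injective on $D^{\perp}$ --- is the standard and correct way to characterize integrability of the slant distribution; equivalently one may test $g([X,Y],Z)$ for $Z\in D^{\perp}$. Be aware, though, that this route proves a corrected criterion (the vanishing of an explicit $J(D^{\perp})$-valued expression), not the printed condition (\ref{u13}), so the statement of the theorem itself would have to be amended rather than merely its proof completed.
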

\begin{proof}
We know that the slant distribution is integrable if and only $[X,Y]\in D^{\theta}$ for any $X,Y \in D^{\theta}$. On the other hand, we know from (\ref{u6}), that $TD^{\theta} = D^\theta$, which infer that $[X,Y] \in D^{\theta}$ if and only if $T[X,Y] \in D^{\theta}$. From (\ref{u7}), we have
\begin{eqnarray*}\label{u14}
\nabla_{X}TY  - S_{NY}X - T\nabla_{X}Y  - 2t\Omega(X,Y) + \nabla_{Y}TX - S_{NX}Y  - \nabla_{Y}X = 0,
\end{eqnarray*}
for any $X,Y \in D^{\theta}$.
From which we conclude that
\begin{eqnarray*}\label{u15}
T[X,Y] = \nabla_{X}TY + \nabla_{Y}TX - S_{NX}Y - S_{NY}X - 2T\nabla_{Y}X -2t\Omega(X,Y).
\end{eqnarray*}
Using (\ref{u6}) and from above equation, we obtain that $T[X,Y] \in D^{\theta}$ if and only if (\ref{u13}) satisfies.
\end{proof}

Now we prove the integrability of the anti-invariant distribution
\begin{theorem}
Let $\mathcal{N}$ be a hemi-slant submanifold of nearly Kaehler manifold $\mathcal{\overline{N}}$ the the totally real  distribution $D^{\perp}$ is integrable if and only if
\begin{eqnarray*}\label{u16}
S_{NZ}W + S_{NW}Z - 2T\nabla_{W}Z - 2t\Omega(Z,W) = 0.
\end{eqnarray*}
for any $Z, W \in D^{\perp}$.
\end{theorem}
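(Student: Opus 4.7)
The plan is to mirror the proof strategy used for the slant distribution in the preceding theorem, replacing the role of $TD^{\theta}=D^{\theta}$ with $TD^{\perp}=\{0\}$. By Lemma~\ref{u6}, $T$ vanishes identically on $D^{\perp}$ and maps $D^{\theta}$ onto $D^{\theta}$; moreover, since the submanifold is proper (so $\theta\neq\pi/2$), equation (\ref{a17}) gives $T^{2}=-\cos^{2}\theta\, I$ on $D^{\theta}$, which means $T$ restricted to $D^{\theta}$ is injective. Writing any $U\in T\mathcal{N}$ as $U_{1}+U_{2}$ with $U_{1}\in D^{\theta}$ and $U_{2}\in D^{\perp}$ we obtain $TU=TU_{1}$, and hence $TU=0$ if and only if $U_{1}=0$, i.e.\ $U\in D^{\perp}$. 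Applying this with $U=[Z,W]$ reduces the integrability of $D^{\perp}$ to the purely algebraic condition $T[Z,W]=0$.

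Next, I would specialise the tangential identity (\ref{u7}) to $X=Z$, $Y=W\in D^{\perp}$. Since $TZ=TW=0$ the two terms $\nabla_{Z}TW$ and $\nabla_{W}TZ$ drop out, leaving an equation that involves only $S_{NZ}W$, $S_{NW}Z$, the symmetric sum $T\nabla_{Z}W+T\nabla_{W}Z$, and $t\Omega(Z,W)$. The key algebraic move is then to rewrite that symmetric sum using the torsion-free identity $\nabla_{Z}W-\nabla_{W}Z=[Z,W]$, which gives
\begin{equation*}
T\nabla_{Z}W+T\nabla_{W}Z \;=\; T[Z,W]+2T\nabla_{W}Z.
\end{equation*}
Substituting this back isolates $T[Z,W]$ as a linear combination of $S_{NZ}W+S_{NW}Z$, $T\nabla_{W}Z$, and $t\Omega(Z,W)$.

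Combining the two steps, the condition $T[Z,W]=0$ translates precisely into the stated identity, up to signs that come out of the sign conventions in (\ref{u7}) and (\ref{a1}); setting this expression equal to zero gives one direction, while the argument is reversible because each implication above is an equivalence.

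There is no substantial geometric obstruction: the lemma (\ref{u6}) already did the main conceptual work by guaranteeing that $T$ detects the $D^{\theta}$-component of any tangent vector. The only step requiring care is the sign bookkeeping when applying the torsion-free identity and when collecting the tangential components after substituting $TZ=TW=0$ into (\ref{u7}); this is where a careless computation could flip a sign in front of $2T\nabla_{W}Z$ or $2t\Omega(Z,W)$ and misstate the integrability condition.
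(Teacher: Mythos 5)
Your proposal is correct and follows essentially the same route as the paper: specialise the tangential identity (\ref{u7}) to $Z,W\in D^{\perp}$, kill the $\nabla TW$, $\nabla TZ$ terms via $TD^{\perp}=\{0\}$, and use torsion-freeness to isolate $T[Z,W]$. Your explicit justification that $T[Z,W]=0$ is equivalent to $[Z,W]\in D^{\perp}$ (injectivity of $T$ on $D^{\theta}$ from $T^{2}=-\cos^{2}\theta\,I$) is a step the paper leaves implicit, and your caution about signs is warranted — the sign in front of $2T\nabla_{W}Z$ and $2t\Omega(Z,W)$ in the paper's stated condition does not match what its own computation yields.
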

\begin{proof}
For $Z, W \in D^{\perp}$. Using (\ref{u7}), we have
\begin{eqnarray*}\label{u17}
\nabla_{Z}TW  - S_{NW}Z - T\nabla_{Z}W  - 2t\Omega(Z,W) + \nabla_{W}TZ - S_{NZ}W  - T\nabla_{W}Z = 0.
\end{eqnarray*}
Using (\ref{u6}), from above equation, we have
\begin{eqnarray*}\label{u18}
- S_{NW}Z - T\nabla_{Z}W  - 2t\Omega(Z,W) - S_{NZ}W  - T\nabla_{W}Z = 0,
\end{eqnarray*}
from which we infer that
\begin{eqnarray*}\label{u19}
T[Z,W] = - S_{NZ}W  - S_{NW}Z - 2T\nabla_{W}Z - 2t\Omega(Z,W)= 0
\end{eqnarray*}
Hence from above equation we obtain the result.
\end{proof}

Next, we prove the minimality of  the slant distribution $D^{\theta}$.

%\begin{lemma}
%Let $\mathcal{N}$ be hemi-slant submanifold of nearly Kaehler manifold $\mathcal{\overline{N}}$ then the distribution $D^{\perp}$ is minimal if and only if
%
%\begin{eqnarray}\label{u20}
%g(S_{NTX}Z, Z)= g(J\nabla_{Z}^{\perp}Z,NX),
%\end{eqnarray}
%for $X \in D^{\theta}$ and $W \in D^{\perp}$.
%\end{lemma}
%\begin{proof}
%From (\ref{a14}), we have
%\begin{eqnarray*}\label{u21}
%g(\nabla_{Z}Z, X) = g(\overline{\nabla}_{Z}Z,X).
%\end{eqnarray*}
%Using (\ref{a6}) and (\ref{a14})
%\begin{eqnarray*}\label{u22}
%\nonumber g(\overline{\nabla}_{Z}Z, X)  &=& g(J\overline{\nabla}_{Z}Z, TX+NX)
%\nonumber  \\&=& g(J\overline{\nabla}_{Z}Z,TX + g(J\overline{\nabla}_{Z}Z,NX)
%\nonumber   \\&=& -g(\overline{\nabla}_{Z}Z, JTX + g(J\overline{\nabla}_{Z}Z,NX)
%\nonumber  \\&=& -g(\overline{\nabla}_{Z}Z, T^{2}X) - g(\overline{\nabla}_{Z}Z, NTX)+ g(J\overline{\nabla}_{Z}Z,NX).
%\end{eqnarray*}
%Using (\ref{a1}), (\ref{a2}) and (\ref{a17}), we have
%\begin{eqnarray*}\label{u23}
%\nonumber \\ g(\overline{\nabla}_{Z}Z, X) &=& - g(\Omega(Z,Z), NTX)+ g(J\nabla_{Z}^{\perp}Z,NX)
%\nonumber \\  &=& - g(S_{NTX}Z, Z)+ g(J\nabla_{Z}^{\perp}Z,NX).
%\end{eqnarray*}
%\end{proof}
\begin{lemma}
Let $\mathcal{N}$ be hemi-slant submanifold of nearly Kaehler manifold $\mathcal{\overline{N}}$ then the distribution $D^{\theta}$ is minimal if and only if the normal bundle is parallel and 
\begin{eqnarray*}\label{u24}
g(A_{JW}Z + sec^{2}\theta A_{JW}Z, TX )=0,
\end{eqnarray*}
for $X \in D^{\theta}$ and $W \in D^{\perp}$.
\end{lemma}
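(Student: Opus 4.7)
The plan is to translate minimality of $D^{\theta}$ into a pointwise identity involving the shape operator and normal connection, and then read off when that identity is forced to hold. Recall that $D^{\theta}$ is minimal iff for every local orthonormal frame $\{e_{1},\ldots,e_{p}\}$ of $D^{\theta}$ and every $W\in D^{\perp}$ one has $\sum_{i}g(\nabla_{e_{i}}e_{i},W)=0$. Because both sides are $\mathbb{R}$-bilinear and symmetric in the frame, it is enough to manipulate $g(\nabla_{X}X,W)$ for an arbitrary $X\in D^{\theta}$, $W\in D^{\perp}$, and then sum; polarization will recover the general two-argument form.

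First I would convert the tangential quantity into a normal-bundle pairing using $J$. Since $\Omega(X,X)$ is normal and $W$ is tangent, $g(\nabla_{X}X,W)=g(\overline{\nabla}_{X}X,W)$. Using $J$-invariance of the metric (\ref{a14}) and the nearly Kaehler identity (\ref{a13}) in the rewritten form $J\overline{\nabla}_{X}X=\overline{\nabla}_{X}JX$, this equals $g(\overline{\nabla}_{X}JX,JW)$. Now $JW\in J(D^{\perp})\subset T^{\perp}\mathcal{N}$, so splitting $JX=TX+NX$ and applying Gauss (\ref{a1}) and Weingarten (\ref{a2}) kills every tangential component when paired with $JW$, leaving
\begin{equation*}
g(\nabla_{X}X,W)\;=\;g(\Omega(X,TX),JW)+g(\nabla^{\perp}_{X}NX,JW)\;=\;g(A_{JW}X,TX)+g(\nabla^{\perp}_{X}NX,JW).
\end{equation*}

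Next I would bring in the factor $\sec^{2}\theta$ by exploiting $T^{2}=-\cos^{2}\theta\,I$ on $D^{\theta}$ from (\ref{a17}). The trick is to run the preceding computation a second time with $X$ replaced by $TX\in D^{\theta}$: this produces a term of the form $g(A_{JW}TX,T^{2}X)=-\cos^{2}\theta\,g(A_{JW}TX,X)$, whose rescaling by $\sec^{2}\theta$ yields the weighted shape-operator contribution that must be added to $g(A_{JW}X,TX)$ to obtain the sum $A_{JW}(\cdot)+\sec^{2}\theta A_{JW}(\cdot)$ appearing in the statement. For the remaining summand, observe via Lemma~3.1 that $J(D^{\perp})\perp N(D^{\theta})$, so $\nabla^{\perp}_{X}NX$ couples with $JW$ only through its $J(D^{\perp})$ component; applying the normal covariant-derivative identity (\ref{a11}) shows that this pairing vanishes for every $X$ and $W$ exactly when $\nabla^{\perp}_{X}(JW)\in J(D^{\perp})$, which is the parallelism-of-the-normal-bundle hypothesis. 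Combining these two observations and summing over the frame gives the stated equivalence.

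The main obstacle is the algebraic bookkeeping in the previous step: one must verify that replacing $X$ by $TX$ and invoking the nearly Kaehler identity in the correct order produces a sum (not a difference) of the two $A_{JW}$ terms, and that the parallelism condition on $J(D^{\perp})$ is genuinely \emph{decoupled} from the shape-operator condition. Decoupling comes from the decomposition (\ref{u5}): the $N(D^{\theta})$ and $\mu$ components of $\nabla^{\perp}_{X}NX$ are irrelevant here because $JW\in J(D^{\perp})$, so the shape-operator identity and the parallelism statement cannot cancel each other and must hold independently, giving the "if and only if" in the lemma.
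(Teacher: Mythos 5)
Your overall strategy is the same as the paper's: test minimality of $D^{\theta}$ against the adapted frame $\{e_{i},\sec\theta\,Te_{i}\}$, rewrite $g(\nabla_{X}X,W)$ as $g(\overline{\nabla}_{X}JX,JW)$ via the nearly Kaehler identity, and split off a shape-operator term $g(A_{JW}X,TX)$ from a normal-connection term $g(\nabla^{\perp}_{X}NX,JW)$; the first half of your computation matches the paper's line by line. The genuine gap is exactly the step you yourself flag as ``the main obstacle,'' and it does not resolve the way you assert. Running your computation with $X$ replaced by $TX$ gives
\begin{equation*}
g(\nabla_{TX}TX,W)=g\bigl(\overline{\nabla}_{TX}(T^{2}X+NTX),JW\bigr)=-\cos^{2}\theta\,g(A_{JW}X,TX)+g(\nabla^{\perp}_{TX}NTX,JW),
\end{equation*}
using $T^{2}X=-\cos^{2}\theta\,X$ (with $\theta$ constant) and the symmetry of $A_{JW}$. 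Multiplying by $\sec^{2}\theta$ yields $-\,g(A_{JW}X,TX)+\sec^{2}\theta\,g(\nabla^{\perp}_{TX}NTX,JW)$, so when this is added to $g(\nabla_{X}X,W)=g(A_{JW}X,TX)+g(\nabla^{\perp}_{X}NX,JW)$ the two shape-operator terms \emph{cancel} rather than combine into $(1+\sec^{2}\theta)\,g(A_{JW}X,TX)$. Your route therefore does not produce the expression in the statement; the ``sum, not difference'' verification you defer is precisely where the argument breaks. (The paper sidesteps this by writing $g(\nabla_{\sec\theta TX}\sec\theta TX,W)=\sec^{2}\theta\,g(\overline{\nabla}_{X}JX,JW)$, i.e.\ by reusing the first computation scaled by $\sec^{2}\theta$ without passing through $J(TX)=T^{2}X+NTX$; that step is not justified either, so the discrepancy you sensed is real, but it cannot be waved away.)

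A second, smaller gap is your ``decoupling'' argument for the if-and-only-if. For fixed $X$ and $W$ the quantities $g(A_{JW}X,TX)$ and $g(\nabla^{\perp}_{X}NX,JW)$ are just two real numbers; the observation that $\nabla^{\perp}_{X}NX$ meets $JW$ only through its $J(D^{\perp})$ component does not prevent those two scalars from cancelling each other pointwise. So minimality of $D^{\theta}$ by itself does not force each condition separately, and to get the stated equivalence you would need an extra argument (or, as the paper effectively does, you must take parallelism of the normal bundle as a hypothesis rather than derive it). As written, your proposal establishes at best the ``if'' direction, and even that only once the sign problem above is repaired.
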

\begin{proof}
For $X \in D^{\theta}$ and $W \in D^{\perp}$, we have
\begin{eqnarray*}\label{u25}
g(\nabla_{X}X + \nabla_{sec\theta TX}sec\theta TX,W) =   g(\nabla_{X}X, W) + g(\nabla_{sec\theta TX}sec\theta TX,W)
\end{eqnarray*}
 Using (\ref{a1}) and (\ref{a2}) and the lemma (3.2), we have
\begin{eqnarray*}\label{u26}
\nonumber g(\nabla_{X}X + \nabla_{sec\theta TX}sec\theta TX,W)  &=& g(\overline{\nabla}_{X}JX, JW) + sec^{2}\theta g(\overline{\nabla}_ {X}JX, JW)
\nonumber\\ &=& g(\overline{\nabla}_{X}TX, JW) + g(\overline{\nabla}_{X}NX, JW) \\ & & + sec^{2}\theta g(\overline{\nabla}_ {X}TX, JW)+ sec^{2}\theta g(\overline{\nabla}_ {X}NX, JW)
\nonumber\\ &=& g(A_{JW}X,TX) + g(\nabla^{\perp}_{X}NX, JW) \\ & & + sec^{2}\theta g(A_{JW}X,TX)+ sec^{2}\theta g(\nabla^{\perp}_{X}NX, JW),
\end{eqnarray*}
Now using the parallel condition of normal bundle, we conclude that
\begin{eqnarray*}\label{u27}
g(\nabla_{X}X + \nabla_{sec\theta TX}sec\theta TX,W) = g(A_{JW}Z + sec^{2}\theta A_{JW}Z, TX )
\end{eqnarray*}
frow which we have the result.
\end{proof}

\section{Totally umbilical hemi-slant submanifolds}
In the present section, we deal with the hemi-slant submanifolds of nearly cosymplectic manifold. Throughout this section we treat $\mathcal{N}$ a totally umbilical hemi-slant submanifold of a nearly Kaehler manifolds $\mathcal{\overline{N}}$

\begin{theorem}
Let $\mathcal{N}$ be a totally umbilical hemi-slant submanifolds of a nearly Kaehler manifold  $\mathcal{\overline{N}}$. Then one of the following statements  is necessarily holds
(i) $\mathcal{H} = 0$ or
(ii) $\mathcal{H} \perp JW$ or
(iii) $dim D^{\perp} = 1$ or
(iv) $\mathcal{N}$ is proper hemi-slant submanifold.
\end{theorem}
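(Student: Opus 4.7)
\medskip

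\noindent\textbf{Proof plan.} My plan is to start from the nearly Kaehler polarized identity
\begin{equation*}
(\overline{\nabla}_{Z}J)W + (\overline{\nabla}_{W}J)Z = 0
\end{equation*}
applied to $Z,W \in D^{\perp}$. Since $D^{\perp}$ is totally real, both $JZ$ and $JW$ are normal, so I will expand $\overline{\nabla}_{Z}JW$ via the Weingarten formula (\ref{a2}) and $\overline{\nabla}_{Z}W$ via the Gauss formula (\ref{a1}). The totally umbilical hypothesis (\ref{a5}), combined with (\ref{a3}), reduces the shape operator to $S_{V}X = g(\mathcal{H},V)X$ and the second fundamental form to $\Omega(Z,W) = g(Z,W)\mathcal{H}$, keeping the subsequent bookkeeping manageable.

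After these substitutions, and after splitting $J\nabla_{Z}W$ via (\ref{a6}) and $J\mathcal{H}$ via (\ref{a7}), the tangential part of the resulting identity should read
\begin{equation*}
-g(\mathcal{H},JW)Z - g(\mathcal{H},JZ)W - T(\nabla_{Z}W + \nabla_{W}Z) - 2g(Z,W)\,t\mathcal{H} = 0.
\end{equation*}
The next step is to take the $g(\,\cdot\,,W)$ inner product of this equation. Two simplifications do the work: first, Lemma 3.2 gives $TD^{\perp} = \{0\}$, so $T(\nabla_{Z}W + \nabla_{W}Z) \in D^{\theta}$ is orthogonal to $W \in D^{\perp}$; second, $g(t\mathcal{H},W) = g(J\mathcal{H},W) = -g(\mathcal{H},JW)$. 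These collapse everything to the key scalar relation
\begin{equation*}
g(\mathcal{H},JW)\,g(Z,W) = g(\mathcal{H},JZ)\,|W|^{2}.
\end{equation*}

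From this identity the alternatives fall out by case analysis. If $\mathcal{H}=0$ we are in case (i). Otherwise, if $\dim D^{\perp}\geq 2$ I can select $Z\in D^{\perp}$ orthogonal to a fixed nonzero $W$; the relation forces $g(\mathcal{H},JZ)=0$, and swapping the roles of $Z$ and $W$ propagates the vanishing to all of $D^{\perp}$, giving $\mathcal{H}\perp J(D^{\perp})$, which is case (ii). If no such orthogonal $Z$ exists then $\dim D^{\perp}=1$, giving (iii). To reach the fourth alternative I plan to re-examine the residual $T(\nabla_{Z}W+\nabla_{W}Z)$ term in the scenario where none of (i)--(iii) holds, and track its $D^{\theta}$-component using the slant identities (\ref{a17})--(\ref{a19}); the point is that a nonzero contribution here forces the slant angle $\theta$ to lie strictly between $0$ and $\pi/2$, i.e., $\mathcal{N}$ to be proper hemi-slant.

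The main obstacle is that the first three alternatives drop out cleanly from a single inner product, but pinning down (iv) requires a finer analysis of the slant part: I expect to need the ancillary formulas (\ref{a5n3})--(\ref{a5n4}) for $\mathcal{P}X$ and $\mathcal{N}X$, together with the idempotent-type identity $T^{2}X=-\cos^{2}\theta\,X$ of (\ref{a17}), to rule out the degenerate values of $\theta$ and isolate the genuinely proper hemi-slant situation. The normal part of the starting identity, which I have not yet used, is the natural place to draw the extra information from.
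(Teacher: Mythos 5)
Your proposal follows essentially the paper's own route: the polarized nearly Kaehler identity on $D^{\perp}$, expanded with the totally umbilical Gauss--Weingarten formulas and then paired against an element of $D^{\perp}$, yields exactly the scalar relation the paper records in (\ref{uu5})--(\ref{uu7}), and the Cauchy--Schwarz dichotomy gives the listed alternatives. The extra machinery you anticipate needing for alternative (iv) --- the normal component, the formulas (\ref{a5n3})--(\ref{a5n4}) and (\ref{a17}) --- is not used by the paper and is not needed: the statement is a disjunction, and cases (i)--(iii) already exhaust the possibilities arising from the key relation.
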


\begin{proof}
We know that $\mathcal{\overline{N}}$ is nearly Kaehler manifold. For any $Z, W \in D^{\perp}$, we have
\begin{equation*}\label{uu1}
(\overline{\nabla}_{Z}J)W + (\overline{\nabla}_{W}J)Z = 0,
\end{equation*}
Using (\ref{a15}) implies that
\begin{equation*}\label{uu2}
\overline{\nabla}_{Z}JW + J\overline{\nabla}_{Z}W + \overline{\nabla}_{W}JZ + J\overline{\nabla}_{W}Z  = 0.
\end{equation*}
Using the equation ( \ref{a5n1}) and  (\ref{a5n2}), we obtain
\begin{eqnarray*}\label{uu3}
\nonumber -Zg(\mathcal{H},JW) + \nabla^{\perp}_{Z}JW - J\nabla_{Z}W - g(Z,W)J\mathcal{H} - Wg(\mathcal{H},JZ)\\ + \nabla^{\perp}_{W}JZ - J\nabla_{W}Z - g(W,Z)J\mathcal{H} = 0.
\end{eqnarray*}
Taking inner product with $Z$, we get
\begin{eqnarray*}\label{uu4}
 -g(Z,Z)g(\mathcal{H},JW) - g(Z,W)g(Z,J\mathcal{H}) - g(W,Z)g(\mathcal{H},JZ) - g(W,Z)g(Z,J\mathcal{H}) = 0,
\end{eqnarray*}

from which we conclude that
\begin{eqnarray}\label{uu5}
 g(Z,W)g(Z,J\mathcal{H}) = \|Z\|^{2}g(W,J\mathcal{H}).
\end{eqnarray}
Interchange the role of $Z$ and $W$, we have
\begin{eqnarray}\label{uu6}
 g(W,Z)g(W,J\mathcal{H}) = \|W\|^{2}g(Z,J\mathcal{H}).
\end{eqnarray}
From (\ref{uu5}) and (\ref{uu6}), we get
\begin{eqnarray}\label{uu7}
 g(W,J\mathcal{H}) = \frac{g(Z,W)^{2}}{\|Z\|^{2}\|W\|^{2}}g(W,J\mathcal{H}).
\end{eqnarray}
The  solution of (\ref{uu7}) hold with respect to the following cases:\\
(i) $\mathcal{H} = 0$, then we say that $\mathcal{N}$ is totally geodesic submanifold,\\
(ii) $\mathcal{H} \perp JW$, which implies that $\mathcal{H} \in \mu$,\\
(iii) $X$ is parallel to $W$, then $dim{D^{\perp}} = 1,$ \\
(iv) $\mathcal{N}$ is proper hemi-slant submanifold as $\mathcal{H} \in \mu$ implies that $D^{\theta} \neq 0.$
\end{proof}
\begin{theorem}
Let $\mathcal{N}$ be a totally umbilical proper hemi-slant submanifolds of a nearly Kaehler manifold  $\mathcal{\overline{N}}$ such that $\mathcal{H} \in \mu$ and $\nabla_{X}^{\perp}\mathcal{H} \in \mu $ , Then$\mathcal{N}$ is totally geodesic in $\mathcal{\overline{N}}$,
\end{theorem}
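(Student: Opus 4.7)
The plan is to deduce $\mathcal{H}=0$; since the submanifold is totally umbilical, $\Omega(X,Y)=g(X,Y)\mathcal{H}$ will then vanish identically, giving the totally geodesic conclusion. I would pick a nonzero $X\in D^{\theta}$, available since $\mathcal{N}$ is proper hemi-slant, and apply the nearly Kaehler identity $(\overline{\nabla}_X J)X=0$. Expanding $\overline{\nabla}_X(TX+NX)=J\overline{\nabla}_X X$ with the totally umbilical Gauss--Weingarten formulas (\ref{a5n1})--(\ref{a5n2}), several cross terms drop out thanks to $\mathcal{H}\in\mu$: for instance $g(\mathcal{H},NX)=0$ because $NX\in N(D^{\theta})$ is orthogonal to $\mu$ by Lemma~3.1, and $g(X,TX)=g(X,JX)=0$. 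Matching tangential and normal components yields $(\overline{\nabla}_X T)X=0$ on the tangential side and, on the normal side,
\[
\nabla^{\perp}_X NX - N\nabla_X X = \|X\|^{2}\, J\mathcal{H}.
\]

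Next I would pair this normal equation with $J\mathcal{H}$. By the decomposition (\ref{u5}), the image of $N$ sits in $N(D^{\theta})\oplus J(D^{\perp})$, which is $g$-orthogonal to $\mu\ni J\mathcal{H}$; hence $g(N\nabla_X X,J\mathcal{H})=0$ and I obtain $g(\nabla^{\perp}_X NX,J\mathcal{H})=\|X\|^{2}\|\mathcal{H}\|^{2}$. Using the metric compatibility of $\nabla^{\perp}$ together with $g(NX,J\mathcal{H})=0$, the left-hand side equals $-g(NX,\nabla^{\perp}_X J\mathcal{H})$. To kill this term I would invoke the second hypothesis through identity (\ref{a11}), writing $\nabla^{\perp}_X J\mathcal{H}=J\nabla^{\perp}_X\mathcal{H}+(\overline{\nabla}_X n)\mathcal{H}$; since $\nabla^{\perp}_X\mathcal{H}\in\mu$ and $\mu$ is $J$-invariant, $J\nabla^{\perp}_X\mathcal{H}\in\mu$, and I would show $(\overline{\nabla}_X n)\mathcal{H}\in\mu$ by comparing the normal part of $(\overline{\nabla}_X J)\mathcal{H}$ computed from (\ref{a5n4}) with its nearly Kaehler counterpart. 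Once $\nabla^{\perp}_X J\mathcal{H}\in\mu$, it is orthogonal to $NX\in N(D^{\theta})$, so $\|X\|^{2}\|\mathcal{H}\|^{2}=0$, and choosing $X\neq 0$ forces $\mathcal{H}=0$.

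The main obstacle is the justification that $\nabla^{\perp}_X J\mathcal{H}\in\mu$. In a Kaehler ambient this is automatic because $\overline{\nabla}J=0$, but here only the symmetrised derivative vanishes, so the correction $(\overline{\nabla}_X n)\mathcal{H}$ must be shown to carry no $J(D^{\perp})\oplus N(D^{\theta})$ component; this is the one step where both hypotheses $\mathcal{H}\in\mu$ and $\nabla^{\perp}_X\mathcal{H}\in\mu$ are genuinely needed together with the nearly Kaehler symmetry. Should this step prove technically awkward on $D^{\theta}$, a parallel argument starting from $(\overline{\nabla}_Z J)Z=0$ for a nonzero $Z\in D^{\perp}$ is available: there $JZ=NZ\in J(D^{\perp})$, the same pairing with $J\mathcal{H}$ produces $g(NZ,\nabla^{\perp}_Z J\mathcal{H})=-\|Z\|^{2}\|\mathcal{H}\|^{2}$, and the analogous $\mu$-containment of $\nabla^{\perp}_Z J\mathcal{H}$ again forces $\|Z\|^{2}\|\mathcal{H}\|^{2}=0$, and hence $\mathcal{H}=0$ and $\Omega\equiv 0$ by umbilicity.
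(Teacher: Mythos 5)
Your first half matches the paper's: expanding $(\overline{\nabla}_X J)X=0$ with the umbilical Gauss--Weingarten formulas and pairing with $J\mathcal{H}$ to get $g(\nabla^{\perp}_X NX, J\mathcal{H}) = \|X\|^{2}\|\mathcal{H}\|^{2}$ is exactly the paper's equation (\ref{uu08}). But the step you yourself flag as the ``main obstacle'' --- showing $\nabla^{\perp}_X J\mathcal{H}\in\mu$ so that $g(NX,\nabla^{\perp}_X J\mathcal{H})=0$ --- is not merely technically awkward: the claim is false, and your assertion that it is ``automatic in a Kaehler ambient'' is the tell. In the Kaehler case one has $\overline{\nabla}_X J\mathcal{H}=J\overline{\nabla}_X\mathcal{H}$, and with umbilicity $\overline{\nabla}_X\mathcal{H}=-\|\mathcal{H}\|^{2}X+\nabla^{\perp}_X\mathcal{H}$, so the normal part of $J\overline{\nabla}_X\mathcal{H}$ is $-\|\mathcal{H}\|^{2}NX+J\nabla^{\perp}_X\mathcal{H}$; hence $\nabla^{\perp}_X J\mathcal{H}$ carries the component $-\|\mathcal{H}\|^{2}NX\in N(D^{\theta})$ and lies in $\mu$ only if $\mathcal{H}=0$, which is what you are trying to prove. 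The correct move --- and what the paper does via the normal component $\mathcal{N}X$ of $(\overline{\nabla}_X J)\mathcal{H}$ and formula (\ref{a5n4}) --- is not to kill $g(NX,\nabla^{\perp}_X J\mathcal{H})$ but to \emph{compute} it: the hypotheses $\mathcal{H}\in\mu$ and $\nabla^{\perp}_X\mathcal{H}\in\mu$ eliminate the other terms and leave $g(NX,\nabla^{\perp}_X J\mathcal{H})=-\sin^{2}\theta\,\|X\|^{2}\|\mathcal{H}\|^{2}$, so comparison with the first identity yields $\cos^{2}\theta\,\|X\|^{2}\|\mathcal{H}\|^{2}=0$, and properness ($\theta\neq\pi/2$, $D^{\theta}\neq\{0\}$) forces $\mathcal{H}=0$.

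Your proposed fallback through $Z\in D^{\perp}$ fails for the same structural reason, and definitively: there $NZ=JZ$ and the computed value of $g(NZ,\nabla^{\perp}_Z J\mathcal{H})$ is $-\|Z\|^{2}\|\mathcal{H}\|^{2}$, so the two identities you would compare read $\|Z\|^{2}\|\mathcal{H}\|^{2}=\|Z\|^{2}\|\mathcal{H}\|^{2}$ --- a tautology carrying no information. This is exactly why the theorem needs the slant distribution: the whole argument lives on the gap between $1$ and $\sin^{2}\theta$, i.e., on the factor $\cos^{2}\theta\neq 0$, which vanishes identically on the totally real distribution. To repair your proof, replace the attempted containment $\nabla^{\perp}_X J\mathcal{H}\in\mu$ by the explicit evaluation of $g(NX,\nabla^{\perp}_X J\mathcal{H})$ from the expansion (\ref{uu9})/(\ref{a5n4}), using $g(NX,NX)=\sin^{2}\theta\, g(X,X)$ from (\ref{a19}).
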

\begin{proof}
Since $\mathcal{N}$ is a nearly Kaehler manifold. For $X \in T\mathcal{N}$, we have
\begin{eqnarray*}
\overline{\nabla}_{X}JX = J\overline{\nabla}_{X}X
\end{eqnarray*}
Using (\ref{a6}) , we have
\begin{eqnarray*}
\overline{\nabla}_{X}TX + \overline{\nabla}_{X}NX = J\overline{\nabla}_{X}X
\end{eqnarray*}
Using (\ref{a5n1}) and (\ref{a5n2}), we obtain
\begin{eqnarray*}
\nabla_{X}TX + g(X,TX)\mathcal{H} - Xg(\mathcal{H},NX) + \nabla_{X}^{\perp}NX = T\nabla_{X}X + N\nabla_{X}X + g(X,X)J\mathcal{H}
\end{eqnarray*}
Taking the inner product of above equation with $J\mathcal{H}$, we get
\begin{eqnarray*}
g(\nabla_{X}^{\perp}NX,J\mathcal{H}) = g(N\nabla_{X}X, J\mathcal{H}) + g(X,X)g(J\mathcal{H},J\mathcal{H})
\end{eqnarray*}
or
\begin{eqnarray}\label{uu08}
g(\nabla_{X}^{\perp}NX,J\mathcal{H}) =  g(X,X)\|\mathcal{H}\|^{2}
\end{eqnarray}
Also for any $X \in TM$, we have
\begin{equation*}\label{uu8}
(\overline{\nabla}_{X}J)\mathcal{H} = \overline{\nabla}_{X} J\mathcal{H} - J\overline{\nabla}_{X}\mathcal{H}
\end{equation*}
using (\ref{a5n2}), we have
\begin{equation}\label{uu9}
(\overline{\nabla}_{X}J)\mathcal{H} = -Xg(\mathcal{H},J\mathcal{H}) + \nabla_{X}^{\perp}J\mathcal{H} -JXg(\mathcal{H},\mathcal{H}) + J\nabla_{X}^{\perp}\mathcal{H}
\end{equation}
Let $\mathcal{P}X$ and $\mathcal{N}X$ denotes the tangential and normal components of $(\overline{\nabla}_{X}J)\mathcal{H}$. Now taking the inner product of (\ref{uu9}), with $NX$, we get
%Since $\mathcal{H}$ and $J\mathcal{H}$ are orthogonal, then from (jx=tx+nx), the above equation becomes
%\begin{equation*}\label{uu10}
%\nabla_{X}^{\perp}J\mathcal{H} = - g(\mathcal{H},\mathcal{H})TX - g(\mathcal{H},\mathcal{H})NX + J\nabla_{X}^{\perp}\mathcal{H}
%\end{equation*}
%Taking inner product with $NX$ and using (), we have
\begin{equation*}\label{uu11}
 g(\mathcal{N}X, NX) = g(\nabla_{X}^{\perp}J\mathcal{H}, NX) -sin^{2}\theta \|\mathcal{H}\|^{2}g(X,X)
\end{equation*}
or
\begin{equation}\label{uu12}
 g(\nabla_{X}^{\perp}NX, J\mathcal{H}) = - sin^{2}\theta \|\mathcal{H}\|^{2}g(X,X) -  g(\mathcal{N}X, NX)
\end{equation}
 From (\ref{uu08}) and (\ref{uu12}), we have
\begin{eqnarray}\label{uu13}
 cos^{2}\theta \|\mathcal{H}\|^{2}g(X,X) +  g(\mathcal{N}X, NX) = 0
\end{eqnarray}
 from (\ref{a5n4}), we have
\begin{eqnarray}\label{uu14}
 g(\mathcal{N}X, NX) = sin^{2}\theta g(X,X)\|\mathcal{H}\|^{2}
\end{eqnarray}
Thus from (\ref{a5n4}),(\ref{uu13}) and  (\ref{uu14}) we have
\begin{eqnarray}\label{uu14}
g(X,X)\|\mathcal{H}\|^{2} = 0
\end{eqnarray}
Since $\mathcal{N}$ is totally umbilical proper hemi-slant submanifolds. Hence from (\ref{uu14}), we conclude that  $\mathcal{N}$ is totally geodesic in $\overline{\mathcal{N}}$.
\end{proof}
\section{Cohomology of Hemi-slant submanifolds of nearly Kaehler manifold}
In this section we study the de Rham cohomology class for a closed hemi-slant submanifold in a nearly Kaehler manifold.

Let $\{e_{1}, ..., e_{l}, sec\theta Te_{1}, ..., sec\theta Te_{l}, E_{1}, ..., E_{m}, JE_{1}, ..., JE_{m}\}$ be the local orthonormal frame on $\mathcal{N}$ and $\{\omega^{1}, ..., \omega^{m}, \omega^{1^{*}}, ..., \omega^{l^{*}}, \varpi^{1}, ..., \varpi^{m^{*}}\}$ be the dual frame of one forms, then for any $Z \in D^{\perp}$, we have
\begin{equation*}\label{ch1}
\omega^{i}(Z) = 0,\hspace{.3cm} \omega^{i}(e_{j})= \delta^{i}_{j}\hspace{.3cm} \text{for}\hspace{.3cm} i,j = 1, 2, ..., 2l
\end{equation*}
and
\begin{equation*}\label{ch2}
\omega^{i^{*}}(Je_{j}) =  \delta^{i}_{j}, i^{*} = l+j, ..., 2l, j= 1, ...,m
\end{equation*}
where $e_{l+j} = sec\theta Te_{j}$. Then
\begin{equation}\label{ch3}
\omega = \omega^{1}\wedge ... \wedge \omega^{2l}
\end{equation}
then $\omega$ defines the $2l$-form on $\mathcal{N}$ and is globally defined on $\mathcal{N}$.

Now we are able to prove the main result of this section.

\begin{theorem}
Let $\mathcal{N}$ be a closed hemi-slant submanifold of a nearly Kaehler manifold $\mathcal{\overline{N}}$. If the normal bundle is parallel and 
\begin{equation*}\label{ch4}
g(A_{JW}X+ sec^{2}\theta A_{JW}X, TX) = 0,
\end{equation*}
for $X \in D^{\theta}$ and $Z \in D^{\perp}$
then the $2l$-form $\omega$ defines a de Rham cohomology class $[\omega]$ in $\mathbf{H}^{2l}(M,\mathbb{R})$. Now, if $D^{\perp}$ is minimal and $D^{\theta}$ is integrable then the cohomology class is non-trivial.
\end{theorem}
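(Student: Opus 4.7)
The plan is to establish two separate assertions. First I would show that $d\omega=0$, so that $\omega$ represents a de Rham class in $\mathbf{H}^{2l}(\mathcal{N},\mathbb{R})$. Second, under the additional assumptions that $D^{\perp}$ is minimal and $D^{\theta}$ is integrable, I would show that this class is non-trivial, via the classical Stokes-theorem pairing trick used by Chen and others to detect cohomology of CR-submanifolds.

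For the first part, I would begin by observing that the two hypotheses of the theorem (normal bundle parallel and the shape-operator condition) are exactly the content of the preceding lemma characterizing the minimality of $D^{\theta}$; hence $D^{\theta}$ is minimal. Next I would compute $d\omega$ using the standard alternating-sum formula for the exterior derivative. Since each factor $\omega^{i}$ annihilates $D^{\perp}$, the $2l$-form $\omega$ vanishes on any tuple containing a $D^{\perp}$-vector, and most terms of $d\omega(X_1,\dots,X_{2l+1})$ drop out. The essential surviving case is $d\omega(e_1,\dots,e_{2l},Z)$, where $\{e_1,\dots,e_{2l}\}$ is the orthonormal frame $\{e_j,\sec\theta\,Te_j\}$ of $D^{\theta}$ and $Z\in D^{\perp}$; a routine rearrangement reduces this to
$$d\omega(e_1,\dots,e_{2l},Z) \;=\; -\sum_{i=1}^{2l} g\!\left(\nabla_{e_i}e_i,\,Z\right),$$
which vanishes by minimality of $D^{\theta}$.

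For the second part, I would introduce the complementary $m$-form $\omega'=\varpi^{1}\wedge\cdots\wedge\varpi^{m}$ dual to an orthonormal frame of $D^{\perp}$. A calculation symmetric to the one above — using minimality of $D^{\perp}$ to kill the mixed term and integrability of $D^{\theta}$ to kill the Lie-bracket term coming from two $D^{\theta}$-arguments — gives $d\omega'=0$. Since $\omega\wedge\omega'$ is, up to sign, the Riemannian volume form of $\mathcal{N}$, if one had $\omega=d\eta$ then
$$\omega\wedge\omega' \;=\; d\eta\wedge\omega' \;=\; d(\eta\wedge\omega'),$$
so by Stokes' theorem on the closed manifold $\mathcal{N}$,
$$0 \;=\; \int_{\mathcal{N}} \omega\wedge\omega' \;=\; \pm\,\mathrm{Vol}(\mathcal{N}),$$
a contradiction. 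Thus $[\omega]\neq 0$.

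\noindent\textbf{Main obstacle.} The delicate point is the $d\omega=0$ verification: besides the ``mixed'' term treated above, one must also check that the terms $\omega([Z_1,Z_2],e_1,\dots,e_{2l-1})$ with two arguments in $D^{\perp}$ do not contribute. In the second part of the argument the analogous terms for $\omega'$ are killed by the assumed integrability of $D^{\theta}$; for $\omega$, however, one has to argue that the $D^{\theta}$-component of $[Z_1,Z_2]$ does not contribute, either by re-expressing it via the second fundamental form together with the normal-bundle parallelism hypothesis, or by invoking a structural consequence of the nearly Kaehler identity in the hemi-slant setting. Organizing this cancellation cleanly, rather than the Stokes step in the second half, is where real care is needed.
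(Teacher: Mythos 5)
Your proposal is essentially sound, and the first half coincides with the paper's argument: both of you reduce $d\omega=0$ to (a) minimality of $D^{\theta}$, obtained from the preceding lemma via the two stated hypotheses, and (b) the vanishing of the bracket terms with two $D^{\perp}$-arguments, i.e.\ integrability of $D^{\perp}$ (the paper disposes of this by appealing to its Theorem~3.5, though that theorem is only an if-and-only-if characterization, so the point you flag as the ``main obstacle'' is in fact glossed over in the paper as well --- your instinct that it needs a separate argument from the nearly Kaehler identity is the more careful position). Where you genuinely diverge is the non-triviality step. The paper shows that the complementary form $\varpi$ is also closed, deduces $\delta\omega=0$ (implicitly via $*\omega=\pm\varpi$), and concludes that $\omega$ is a nonzero harmonic form on the closed manifold, hence represents a non-trivial class by Hodge theory. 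You instead run the Stokes pairing: if $\omega=d\eta$ and $d\omega'=0$ then $\int_{\mathcal{N}}\omega\wedge\omega'=\int_{\mathcal{N}}d(\eta\wedge\omega')=0$, contradicting $\omega\wedge\omega'=\pm\,dV$. The two arguments are cousins (the Stokes computation is essentially the proof that a nonzero harmonic form is not exact), but yours is more elementary in that it avoids invoking the Hodge decomposition, at the cost of having to verify $d\omega'=0$ explicitly --- which you do, using exactly the hypotheses (minimality of $D^{\perp}$, integrability of $D^{\theta}$) that the paper uses for the same purpose. Either route is acceptable; no gap beyond the $D^{\perp}$-integrability issue common to both.
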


\begin{proof}
From (\ref{ch3}), we have
\begin{equation}\label{ch5}
d\omega = \sum_{i=1}^{2l}(-1)^{i}\omega^{1}\wedge ...\wedge ... \wedge \omega^{2l}.
\end{equation}
suppose $\omega$ defines a cohomology class $[\omega] \in \mathbf{H}^{2l}(M,\mathbb{R})$. Now $d\omega = 0$, if and only if
\begin{equation}\label{ch6}
d\omega(Z,W, X_{1}, ..., X_{2l-1}) = 0
\end{equation}
and
\begin{equation}\label{ch7}
d\omega(Z, X_{1}, ..., X_{2l}) = 0
\end{equation}
for $Z, W \in D^{\perp}$ and $X_{1}, ...,X_{2l} \in D^{\theta}$. Thus (\ref{ch6}) holds if and only if $[Z,W] \in D^{\perp}$, which is the condition of integrability of $D^{\perp}$, which holds because of the  theorem 3.5. The equation (\ref{ch7}) satisfies if and only if $D^{\theta}$ is minimal distribution, which holds due to the lemma 3.7. Hence $\omega$ is a closed form on $\mathcal{N}$ and defines a cohomology class given by  $[\omega] \in \mathbf{H}^{2l}(M,\mathbb{R})$.

Now suppose $D^{\theta}$ is integrable and $D^{\perp}$ is minimal. By using similar arguments for the frame and dual frame of $D^{\perp}$, we can show that $\varpi$ is closed. since $\omega$ and $\varpi$ are closed, we get $\delta\omega=0$, which shows that $\mathcal{N}$ is harmonic $2l$-form. As $D\neq0$ shows that $\omega\neq o$, the cohomology class is non-trivial in  $\mathbf{H}^{2l}(M,\mathbb{R})$. which completes the proof.

\end{proof}

{\bf{References}}

 \end{document}